\documentclass[reqno,tbtags,11pt]{amsart}

\usepackage{microtype}
\usepackage{mathtools}
\usepackage{siunitx}
\usepackage{graphicx}
\usepackage{csquotes}
\usepackage{tabularray}
\UseTblrLibrary{booktabs}

\newcommand{\dd}{\mathop{}\!{\mathrm {d}}}
\newcommand{\ii}{\mathrm {i}}
\newcommand{\ee}{\mathrm {e}}

\DeclarePairedDelimiter{\abs}         {\lvert}{\rvert}
\DeclarePairedDelimiter{\norm}        {\lVert}{\rVert}
\DeclarePairedDelimiter{\innerproduct}{\langle}{\rangle}

\newcommand{\Quadraticform}{\mathfrak {h}}
\newcommand{\I}{\mathfrak {i}}
\newcommand{\Ab}{\mathbf {A}}

\newtheorem{theorem}{Theorem}
\newtheorem{lemma}[theorem]{Lemma}
\numberwithin{equation}{section}

\author[C. Léna]{Corentin Léna}
\address
  [C. Léna]
  {University of Padua, Department of Management and Engineering - DTG, Stradella
  S. Nicola 3, 36100 Vicenza and Department of Mathematics \enquote {Tullio
  Levi-Civita}, via Trieste 63, 35121 Padua, Italy.}
\email{corentin.lena@unipd.it}

\author[M. Sundqvist]{Mikael Sundqvist}
\address
  [M. Sundqvist]
  {Department of Mathematics, Lund University, Sweden}
\email{mikael.persson\_sundqvist@math.lth.se}

\title
  {A magnetic eigenvalue bound in the disk}

\begin{document}

\begin{abstract}
  We consider the magnetic Schrödinger operator in the unit disk with constant
  magnetic field of strength \(b>0\) and magnetic Neumann boundary condition. If
  \(\lambda_1(b)\) denotes its lowest eigenvalue, then we prove that
  \(\lambda_1(b) < \Theta_0 b\) for all \(b>0\), where \(\Theta_0\) is the de
  Gennes constant. The proof has two parts, both based on Rayleigh's principle.
  For large \(b\), we use a trial state built from the de Gennes ground state.
  For the remaining bounded range of \(b\), we divide the interval into finitely
  many overlapping subintervals and, on each of them, choose a trial state from a
  finite-dimensional space. This reduces the proof to finitely many inequalities
  between rational numbers.
\end{abstract}

\maketitle

\section{Introduction}

We consider the magnetic Schrödinger operator in the unit disk \(\mathbb D\),
with constant magnetic field of strength \(b>0\) and magnetic Neumann boundary
condition. More explicitly, we choose a magnetic vector potential \(\Ab =
(b/2)(-x _ 2, x _ 1)\) (known as a rotationally invariant gauge) and define the
quadratic form
\[
  \Quadraticform[\psi]
  =
  \int_{\mathbb D} \abs{(\ii\nabla+\Ab)\psi}^2 \dd x
\]
for $\psi\in H^1(\mathbb D)$. According to standard results in spectral theory
(see for instance \cite[Chapter 1]{MR2662319}), there exists a unique
self-adjoint operator $\mathcal H(b)$ with a domain $\mathcal D$ dense in
$H^1(\mathbb D)$ and satisfying $\Quadraticform[{\psi,\varphi}]=\innerproduct
{\mathcal H(b)\psi,\varphi}$ for all $\psi\in\mathcal D$ and $\varphi\in
H^1(\mathbb D)$. This is, by definition, the magnetic Schrödinger operator
considered in this note. Since $H^1(\mathbb D)$ is compactly embedded in
$L^2(\mathbb D)$, $\mathcal H(b)$ has discrete spectrum; we denote by
\(\lambda_1(b)\) its lowest eigenvalue. According to Rayleigh's principle (a
special case of the Courant--Fisher min-max formula),
\[
  \lambda_1(b)=\inf_{\psi\in H^1(\mathbb D)}\frac{\Quadraticform[\psi]}{\norm {\psi}^2}.
\]
In particular, for any $b>0$ and any $\psi_0\in H^1(\mathbb D)$, called a
\emph{trial state}, we have
\[
  \lambda_1(b)\le \frac{\Quadraticform[\psi_0]}{\norm {\psi_0}^2},
\]
where the right-hand side is called the Rayleigh quotient for $\psi_0$. We use this
fact repeatedly in the present note.

For a more general two-dimensional smooth bounded domain $\Omega$, the function
$b\mapsto \lambda_1(\Omega,b)$, defined as above, plays an important role in the
Ginzburg-Landau theory of superconductivity. This motivated the development of an
extensive asymptotic theory when $b\to+\infty$ (see \cite{MR2662319} and
references therein). It was found that
\begin{equation}\label{eq:asymptotics}
  \lambda_1(\Omega,b)
  =
  \Theta_0 b-C_1\kappa(\partial\Omega)b^{1/2}
  +
  O(b^{1/3}),
\end{equation}
where $\kappa(\partial\Omega)$ is the maximum of the curvature of
$\partial\Omega$, and $\Theta_0$ and $C_1$ are universal constants (see
\cite[Theorem 8.3.2]{MR2662319}). The \emph{de Gennes constant} \(\Theta_0\) is
the minimum of the lowest eigenvalue of the Neumann harmonic oscillator on the
half-line and $C_1=\varphi(0)^2/3$, with $\varphi$ the corresponding normalized
ground state. We recall the relevant facts about this one-dimensional eigenvalue
problem in Appendix~\ref{sec:deGennes}.

For an arbitrary smooth domain $\Omega$, it follows from \eqref{eq:asymptotics}
that $\lambda_1(\Omega,b)<\Theta_0 b$ for $b$ large enough. It has been proved
in~\cite[Theorem 2.2]{MR4659291} that the inequality holds for all $b>0$ when
$\Omega$ belongs to some particular classes of domains with corners. The same
holds true for infinite sectors if the opening angle is sufficiently small or
close to \(\pi\), see~\cite{MR3906610,MR4780715} and the references therein. On
the basis of numerical and asymptotic evidence, it has been conjectured that
the inequality also holds in the unit disk (see~\cite[Conjecture 1.3]{MR4947381}
and Figure~\ref{fig:eigenvalue}). The goal of this note is to prove that result,
summarized in the following theorem.

\begin{theorem}\label{thm:main}
  For all \( b > 0 \), it holds that \( \lambda _ 1 (b) < \Theta _ 0 b\).
\end{theorem}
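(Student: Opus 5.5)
The proof splits into a large-field regime and a bounded regime, and in both we only use Rayleigh's principle with explicit trial states.

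\emph{Large $b$.} Use polar coordinates $(r,\theta)$ and the boundary variable $t=1-r$. We take a trial state supported near the boundary,
\[
  \psi_b(r,\theta)=\ee^{-\ii n\theta}\,\chi(1-r)\,\varphi\bigl(b^{1/2}(1-r)\bigr).
\]
Here $\varphi$ is the de Gennes ground state from Appendix~\ref{sec:deGennes}, $\chi$ is a smooth cutoff equal to $1$ near $0$, and $n$ is an integer chosen close to $b/2-\xi_0 b^{1/2}$, where $\xi_0=\sqrt{\Theta_0}$ is the minimizing frequency. For a separated state $\ee^{-\ii n\theta}f(r)$ the quadratic form is
\[
  \Quadraticform=2\pi\int_0^1\Bigl(\abs{f'}^2+\bigl(\tfrac{n}{r}-\tfrac{br}{2}\bigr)^2\abs{f}^2\Bigr)r\,\dd r .
\]
Expanding in $t$ and using the moment identities for $\varphi$ (the Feynman--Hellmann relation $\int(s-\xi_0)\varphi^2=0$ and the formula for $\int (s-\xi_0)^3\varphi^2$ in terms of $\varphi(0)^2$), we get the Rayleigh quotient
\[
  \Theta_0 b - C_1 b^{1/2}+O(1).
\]
The $O(1)$ includes the error from rounding $n$ to an integer, which shifts $\xi$ by $O(b^{-1/2})$ and so costs $O(1)$ by the quadratic behaviour of $\mu(\xi)$ at $\xi_0$. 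To obtain an explicit threshold $b_0$, every remainder must be bounded explicitly, using exponential decay of $\varphi$ and explicit numerical enclosures of $\Theta_0$ and $\varphi(0)$. Then $C_1b^{1/2}$ beats the remainder for all $b\ge b_0$.

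\emph{Bounded range $b\in(0,b_0]$.} For small $b$, the constant function gives $\lambda_1(b)\le b^2/8$, since $\norm{\Ab}^2/\abs{\mathbb D}=b^2/8$. This is below $\Theta_0 b$ for $b<8\Theta_0$, and $8\Theta_0\approx 4.7$. On the rest of the interval we cover $[8\Theta_0-\varepsilon,b_0]$ by finitely many overlapping subintervals $I_j$. On each $I_j$ we fix an angular mode $n_j$ and a polynomial radial profile $f_j(r)=\sum_k c_k r^{k}$ with rational coefficients (with $r^{n_j}$ factored out for regularity). For fixed $f_j$ the Rayleigh quotient is
\[
  R_j(b)=\frac{A_j+B_j b+C_j b^2}{N_j},
\]
with $A_j,B_j,C_j,N_j$ rational integrals of polynomials. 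So $R_j(b)<\Theta_0 b$ on $I_j=[\beta_j,\gamma_j]$ is a quadratic inequality in $b$. Since $C_j>0$, the function $R_j(b)-\Theta_0 b$ is convex, and it suffices to check it at the two endpoints. Replacing $\Theta_0$ by a rational lower bound $\underline\Theta$ (from the Appendix) reduces everything to finitely many rational inequalities. The coefficients $c_k$ are found numerically, for instance by Galerkin approximation of the radial problem at the midpoint of $I_j$, then rationalized.

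\emph{Main obstacle.} The hardest part is making $b_0$ in the large-$b$ step small enough that the finite verification is feasible. The margin $\Theta_0b-\lambda_1(b)\sim C_1 b^{1/2}$ is modest, while crude remainder bounds could push $b_0$ into the hundreds or thousands. To keep $b_0$ small, I would optimize the choice of $n$, use the exact weight $r=1-t$ rather than expansions where possible, and possibly include a first-order correction in the trial state.

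In the intermediate regime, near-crossings of eigenvalue branches in $n$ force short subintervals. Polynomial degree must also grow with $b$ to capture boundary localization. So we should expect many intervals there, but each one is a routine check.
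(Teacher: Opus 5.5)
Your architecture matches the paper's. You use a de Gennes-type trial state for large $b$ and the constant function for $b<8\Theta_0$. On the remaining bounded range you use finitely many polynomial trial states with a factor $r^{m}$ and rational coefficients, checked at integer endpoints by convexity with a rational lower bound for $\Theta_0$. (The paper uses $r^m(1-r^2)^j$, $0\le j\le 8$, $m=1,\dots,56$, covering $[3,131]$ with $\Theta_*=5901/10000$.) Your asymptotics are also right: for $\varphi(b^{1/2}(1-r))$ the moment identities do give the coefficient $-C_1$ of $b^{1/2}$. There is a small slip: with the paper's gauge, $\ee^{-\ii n\theta}f(r)$ gives the potential $(n/r+br/2)^2$, so you need $\ee^{\ii n\theta}$ with $n>0$.

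The gap is that your large-$b$ step only proves the inequality for $b$ beyond an unspecified constant. The theorem for all $b>0$ rests entirely on making that constant explicit and small enough to be matched by the finite check. Saying that every remainder must be bounded explicitly is the content of this step, not a detail. With your trial state you would need explicit tail and pointwise decay bounds for $\varphi$ and $\varphi'$ to control the cutoff $\chi$ in both the energy and the norm. You would also need explicit control of the factor $1/r$ in $(n-br^2/2)^2/r$. You supply none of these, and you yourself expect the threshold could be in the thousands, which would also change the bounded-range computation.

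The idea you are missing is the conformal coordinate $t=\sqrt b\ln(1/r)$, i.e.\ the trial state $\varphi(\sqrt b\ln(1/r))\,\ee^{\ii m\theta}/\sqrt{2\pi}$. This choice removes each difficulty above:
\begin{itemize}
\item The kinetic energy becomes exactly $\sqrt b\int_0^{\infty}\varphi'(t)^2\dd t$, with no weight.
\item The Gaussian decay of $\varphi$ makes the state vanish to infinite order at $r=0$, so no cutoff is needed.
\item The only non-polynomial weights are $\ee^{-2t/\sqrt b}$ and $\ee^{-4t/\sqrt b}$. These can be bounded on the favourable side by Taylor polynomials of degree $3$ or $4$.
\end{itemize}
All remaining integrals are the moments $T_1,\dots,T_4$, which are known exactly in terms of $\xi_0$ and $C_1$. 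Choosing the integer $m$ nearest to $m_{\mathrm{opt}}$ then gives $\I[u]\le -C_1+Bb^{-1/2}+Cb^{-1}$ with explicit $B$ and $C$. The rigorous enclosures of $\Theta_0$ and $\varphi(0)$ then give $b_0<130$. Your route might be made quantitative with extra work, but as written the proof is incomplete exactly at the step joining the two regimes.
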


Our proof is divided into two parts, corresponding respectively to \enquote
{small} and \enquote {large} \(b\). We first build a trial state from the de
Gennes ground state $\varphi$ and use it to prove that the inequality holds for
all $b$ above an explicit threshold, depending on the constants $\Theta_0$ and
$C_1$. For the remaining \(b\) below this threshold, we use trial states
picked from a finite-dimensional space of polynomial functions and verify the
required inequalities on finitely many overlapping intervals by exact rational
computations.

\begin{figure}[htb]
  \includegraphics[width=\textwidth]{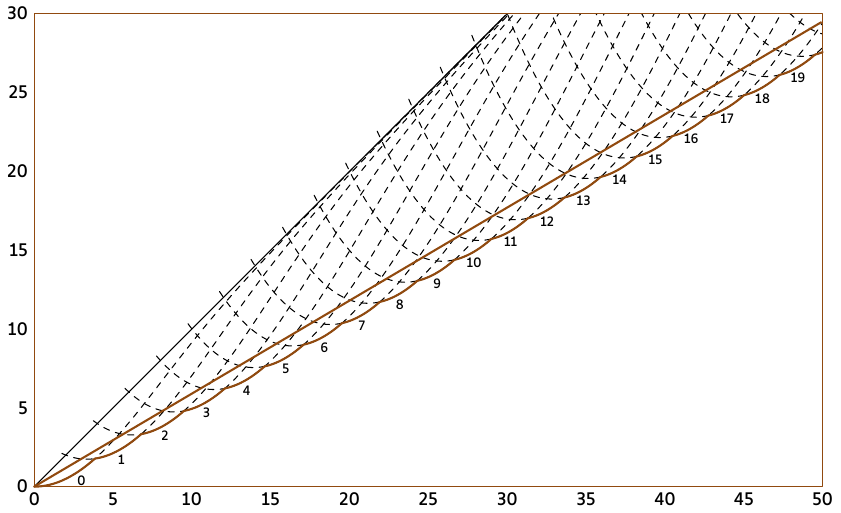}
  \caption{A reproduction of the figure by Saint-James \cite {SAINTJAMES196513}.
  We calculated the eigenvalue curves for a few \(m\) with the help of Mathematica.
  The smallest eigenvalue \(\lambda _ 1(b)\) and the line \(b \mapsto \Theta _ 0
  b\) are drawn with thicker lines.}
  \label{fig:eigenvalue}
\end{figure}

\section{Large intensities}

\begin{lemma}
  If \( b > 130 \) then \(\lambda _ 1 (b) < \Theta _ 0 b\).
\end{lemma}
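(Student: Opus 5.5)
We build an explicit trial state concentrated near the boundary and show that its Rayleigh quotient is strictly below \(\Theta_0 b\) once \(b>130\). We work in polar coordinates \((r,\theta)\) and use the boundary distance \(t=1-r\). The state is \(\psi(r,\theta)=\ee^{-\ii m\theta}u(r)\), where \(u(r)=\chi(1-r)\,\varphi(b^{1/2}(1-r))\). Here \(\varphi\) is the normalized de Gennes ground state on the half-line from Appendix~\ref{sec:deGennes}, associated with \(\xi_0=\sqrt{\Theta_0}\). The integer \(m\) is chosen closest to \(b/2-\xi_0 b^{1/2}\). The cutoff \(\chi\) is smooth, equals \(1\) on \([0,1/2]\) and vanishes near \(t=1\); alternatively one can take \(\chi\) piecewise linear so that everything stays explicit.

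In the rotationally invariant gauge the quadratic form becomes
\[
  \Quadraticform[\psi]=2\pi\int_0^1\Bigl(\abs{u'}^2+\Bigl(\frac{m}{r}-\frac{br}{2}\Bigr)^2\abs{u}^2\Bigr)r\dd r .
\]
We change variables \(\tau=b^{1/2}(1-r)\) and expand \(\frac mr-\frac{br}{2}\) around \(r=1\). With \(m=b/2-\xi_0b^{1/2}+\delta\) and \(\abs{\delta}\le1/2\) this gives
\[
  \frac mr-\frac{br}2=b^{1/2}(\tau-\xi_0)+(\text{explicit remainder}).
\]
The remainder is a series in \(b^{-1/2}\) involving \(\tau^2\), \(\xi_0\tau\) and \(\delta\). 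The leading term reproduces the de Gennes quadratic form, and its value is \(\Theta_0 b\) times the norm. The \(b^{1/2}\) correction, combined with the weight \(r=1-b^{-1/2}\tau\), yields the classical curvature term \(-C_1 b^{1/2}\) with \(C_1=\varphi(0)^2/3\). This uses the standard moment identities for \(\varphi\): the Feynman–Hellmann relation \(\int(\tau-\xi_0)\varphi^2=0\) and the formula for \(\int(\tau-\xi_0)^3\varphi^2\) in terms of \(\varphi(0)^2\), as recorded in the appendix.

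The real work is making the remainders explicit rather than \(O(\cdot)\). We bound the quotient as
\[
  \frac{\Quadraticform[\psi]}{\norm{\psi}^2}\le\Theta_0 b-C_1b^{1/2}+R(b),
\]
where \(R(b)\) is an explicit bounded quantity. It involves only \(\xi_0\), the rounding error \(\delta\), and finitely many moments \(\int\tau^k\varphi^2\). The cutoff contributes terms that are exponentially small, since \(\varphi\) decays like a Gaussian. To control the moments we use the appendix facts: the Gaussian decay bound on \(\varphi\) and numerical enclosures of \(\Theta_0\) and \(\varphi(0)^2\) (roughly \(\Theta_0\approx0.5901\), \(C_1\approx0.2545\)). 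For exact expansions we treat \(1/r\) via the bound \(\frac1{1-s}\le1+s+2s^2\) on \(s\le1/2\).

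We then conclude by checking \(C_1b^{1/2}>R(b)\) for \(b>130\). We expect the main obstacle to be controlling the remainder sharply enough that the threshold comes out as low as \(130\). Note \(b^{1/2}\approx11.4\), so \(C_1b^{1/2}\approx2.9\); crude bounds on \(R\) would fail. Two devices help keep \(R(b)\) small:
\begin{itemize}
\item choosing \(m\) to minimize the \(\delta\)-dependent terms, which enter only at order \(\delta^2\) because \(\int(\tau-\xi_0)\varphi^2=0\);
\item computing the \(\tau^2\) cross term exactly through the moment identities rather than bounding it.
\end{itemize}
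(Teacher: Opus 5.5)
Your strategy (de Gennes profile in a boundary variable, $m\approx b/2-\xi_0b^{1/2}$, Rayleigh's principle) is sound, but the proposal stops exactly where the content of the lemma lies. The lemma is an explicit-threshold statement, and you never write down $R(b)$. Checking $C_1b^{1/2}>R(b)$ for $b>130$ is the proof, not a formality, and you flag it yourself as the main obstacle.

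Your linear variable $\tau=b^{1/2}(1-r)$ also needs ingredients that you do not supply and that the paper does not provide:
(i) Explicit tail bounds for $\varphi$ and $\varphi'$. The appendix has none, so you would have to prove them, e.g.\ by comparison on $\{\tau>2\xi_0\}$, where $(\tau-\xi_0)^2>\Theta_0$. They are needed where $\chi'$ enters and your bound on $1/(1-s)$ fails, to pass from $\int_0^{\sqrt b}$ to $\int_0^\infty$ in the kinetic term, and to bound the norm from below.
(ii) The identity $\int_0^\infty\tau\varphi'(\tau)^2\dd\tau=C_1+\tfrac12\xi_0\Theta_0$, obtained by multiplying the equation by $\tau\varphi$. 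In your variables this is where $C_1$ actually comes from: at order $b^{1/2}$ the potential and norm corrections involve only $\int(\tau-\xi_0)\varphi^2$ and $\int(\tau-\xi_0)^2\varphi^2$.
(iii) Moments up to $\int\tau^6\varphi^2$, because $1+s+2s^2$ multiplies a square containing $\tau^4$.

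Two statements are also incorrect. With $\Ab=(b/2)(-x_2,x_1)$ and $\abs{(\ii\nabla+\Ab)\psi}^2$, the state $\ee^{-\ii m\theta}u(r)$ gives the potential $(m/r+br/2)^2$; you need $\ee^{\ii m\theta}$. And $\delta$ does not enter only quadratically. The relation $\int(\tau-\xi_0)\varphi^2=0$ removes the linear term only at order $b^{1/2}$. At order $b^0$ the quotient contains $\delta^2-\tfrac12\Theta_0\delta$, because $2\int\tau(\tau-\xi_0)\varphi^2-\int\tau^2\varphi^2=-\tfrac12\Theta_0$. This is harmless for $\abs{\delta}\le 1/2$, but it belongs in $R(b)$.

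The paper avoids all of this with the variable $t=\sqrt b\ln(1/r)$, which maps $r\in(0,1]$ onto $t\in[0,+\infty)$. Consequently:
$\varphi$ is used untruncated, with no cutoff and no tail estimates;
the kinetic term is exactly $\sqrt b\int\varphi'^2=\sqrt b\,\xi_0^2/2$, with no weight;
all the geometry sits in $\ee^{-2t/\sqrt b}$ and $\ee^{-4t/\sqrt b}$, which are squeezed between consecutive Maclaurin polynomials.
Only $T_1,\dots,T_4$ enter, all explicit in $\xi_0$ and $C_1$. The paper works with $\I[u]=\Quadraticform[u]-\Theta_0b\norm{u}^2$ rather than the quotient and completes the square in $m$. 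This gives $\I[u]\le-A+Bb^{-1/2}+Cb^{-1}$, and the enclosures of $\Theta_0$ and $\varphi(0)$ then give $b_0<130$ in exact rational arithmetic.

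Your variable is the one used in the general boundary-coordinate analysis, so it would adapt to other domains. A rough count suggests its remainder coefficients are of order one, so it would plausibly clear $130$ as well. But until $R(b)$ is written down and checked with rigorous enclosures, your argument does not prove the lemma.
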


\begin{proof}
Let \(b > 0\) be arbitrary, and let \( \varphi \) be the de Gennes function, \(C
_ 1 = \varphi(0) ^ 2/3\), and let \( \xi _ 0 > 0 \) be the spectral constant from
Appendix~\ref{sec:deGennes}. We will work with the trial state given, in polar
coordinates \((r,\theta)\), by
\[
  u(r,\theta) = \varphi(\sqrt {b} \ln (1/r))\frac {\ee ^ {\ii m\theta}}{\sqrt {2\pi}}.
\]
In physical terms, the integer $m$ can be interpreted as the \emph{angular
momentum} of the trial state. In practice, for a given $b>0$, we have to choose
$m$ so that the Rayleigh quotient is small enough.

With the change of variable \(t = \sqrt {b} \ln (1/r)\), we find that
the quadratic form can be written as
\[
  \begin{aligned}
    \Quadraticform[u]
    &
    = \int _ 0 ^ {+\infty}
    \Bigl[
      \sqrt {b}\varphi'(t) ^ 2
      + \frac{1}{\sqrt{b}}(m - b \ee ^ {-2t/\sqrt{b}}/2) ^ 2 \varphi(t) ^ 2
    \Bigr] \dd t
    \\
    &
    =
    \frac {\sqrt {b}\xi _ 0 ^ 2}{2}
    + \frac {m ^ 2}{\sqrt {b}}
    + \int _ 0 ^ {+\infty}
      \Bigl[
        \frac {b ^ {3/2}}{4} \ee ^ {-4t/\sqrt {b}} \varphi(t) ^ 2
        - m\sqrt {b} \ee ^ {-2t/\sqrt {b}} \varphi(t) ^ 2
      \Bigr] \dd t
  \end{aligned}
\]
and the norm becomes
\[
  \norm {u} ^ 2
  =
  \int _ 0 ^ {+\infty} \frac {1}{\sqrt {b}} \ee ^ {-2t/\sqrt {b}} \varphi(t) ^ 2 \dd t.
\]

For our purpose, it is more convenient to consider a slightly different quadratic
form. Let us define \(\I[u] = \Quadraticform[u] - \Theta _ 0 b \norm {u} ^ 2\),
where \(\Theta _ 0 = \xi _ 0 ^ 2\). According to Rayleigh's principle, if, for a
given $b>0$, we can find an integer $m$ such that $\I[u]<0$, then
$\lambda_1(b)<\Theta_0 b$.

From Maclaurin expansions of the exponential function, we find that, for \( x > 0
\), the odd-degree Taylor polynomials lie below \(\ee ^ {-x}\), while the
even-degree ones lie above it. In particular
\[
  1 - x  + \frac {x ^ 2}{2} - \frac {x ^ 3}{6}
  <
  \ee ^ {-x}
  <
  1 - x  + \frac {x ^ 2}{2} - \frac {x ^ 3}{6} + \frac {x ^ 4}{24},
  \quad x > 0.
\]
We therefore find, with the momentum integrals \( T _ k \coloneq \int _ 0 ^
{+\infty} t ^ k \varphi(t) ^ 2 \dd t\), that
\[
  \begin{aligned}
    \I[u]
    &
    \leq
    \frac {b ^ {1/2}\xi _ 0 ^ 2}{2}
    + m ^ 2 b ^ {-1/2}
    + \frac {1}{4} b ^ {3/2}
    - b T _ 1
    + 2 b ^ {1/2} T _ 2
    - \frac {8}{3} T _ 3
    + \frac {8}{3} b ^ {-1/2} T _ 4
    \\
    & \qquad
    - m b ^ {1/2}
    + 2 m T _ 1
    - 2 m b ^ {-1/2} T _ 2
    + \frac {4}{3} m b ^ {-1} T _ 3
    \\
    & \qquad
    - \xi _ 0 ^ 2 b
    \bigl(
      b ^ {-1/2}
      - 2 b ^ {-1} T _ 1
      + 2 b ^ {-3/2} T _ 2
      - \frac {4}{3} b ^ {-2} T _ 3
    \bigr).
  \end{aligned}
\]
The right-hand side is a quadratic polynomial in the angular momentum \( m \)
which has its minimum for \( m = m _ {\mathrm {opt}}\), where
\[
  m _ {\mathrm {opt}}
  =
  \frac {b}{2} - T _ 1 b ^ {1/2} + T _ 2 - \frac {2}{3} T _ 3 b ^ {-1/2}.
\]
Since \( m \) has to be an integer, we cannot always use this value, but we can
choose \( m = m _ {\mathrm {opt}} + \epsilon \), with \( \abs {\epsilon} \leq
1/2\). Substituting this value, we obtain one term involving \(\epsilon ^ 2\),
with positive coefficient because the coefficient of \(m ^ 2\) is positive. We
therefore use the estimate \(\epsilon ^ 2 \leq 1/4\); the remaining terms are
independent of \(\epsilon\). Collecting powers of \(b\), we obtain
\[
  \begin{aligned}
    \I[u]
    & \leq
    \Bigl(
      T _ 2
      - T _ 1 ^ 2
      - \frac {\xi _ 0 ^ 2}{2}
    \Bigr) b ^ {1/2}
    +
    \bigl(
      2T _ 1 T _ 2
      - 2T _ 3
      + 2 \xi _ 0 ^ 2 T _ 1
    \bigr)
    \\
    &\qquad
    +
    \Bigl(
      \frac {8}{3} T _ 4
      -\frac {4}{3} T _ 1 T _3
      - T _ 2 ^ 2
      - 2 \xi _ 0 ^ 2 T _ 2
      + \frac {1}{4}
    \Bigr) b ^ {-1/2}
    \\
    &\qquad
    +
    \Bigl(
      \frac {4}{3} T _ 2 T _ 3
      + \frac {4}{3} \xi _ 0 ^ 2 T _ 3
    \Bigr) b ^ {-1}
    -
    \frac {4}{9} T _ 3 ^ 2 b ^ {-3/2}.
  \end{aligned}
\]
We insert the values of the first few momentum integrals (see
Appendix~\ref{sec:deGennes}),
\[
  T _ 1 = \xi _ 0,\quad
  T _ 2 = \frac {3}{2}\xi _ 0 ^ 2,\quad
  T _ 3 = \frac {C _ 1}{2} + \frac {5}{2} \xi _ 0 ^3,\quad
  T _ 4 = \frac {3}{8} + \frac {35}{8}\xi _ 0 ^ 4 + \frac {7}{8} C _ 1 \xi _ 0.
\]
Substituting these values into \(\I[u]\), and throwing away the negative \(b ^
{-3/2}\) term, we find that the inequality simplifies to
\begin{equation}\label{eq:secondtermbound}
  \I[u]
  \leq
  - C _ 1
  + \Bigl(\frac {5}{4} + \frac {5}{3} C _ 1 \xi _ 0
  + \frac {37}{12} \xi _ 0 ^ 4\Bigr) b ^ {-1/2}
  + \frac {5}{3} \xi _ 0 ^ 2(C _ 1 + 5 \xi _ 0 ^ 3)b ^ {-1}.
\end{equation}
We can write this inequality as \(\I[u] \leq -A + B b ^ {-1/2} + C b ^ {-1}\)
where \(A\), \(B\) and \(C\) are positive constants. Then it holds that
\(\I[u] < 0\) if \( b > b _ 0 \), with
\begin{equation}\label{eq:b0}
  b _ 0 = \frac {\bigl(B + \sqrt {B ^ 2 + 4AC}\bigr) ^ 2} {4A ^ 2}.
\end{equation}
Inserting the values of the spectral constants \(\xi_0\) and \(C_1\), we find
that \( b _ 0 \approx 127.4 \), rounded to one decimal place. We now derive a
rigorous bound. It was proved in~\cite[Theorem 1.1]{MR2912745} that
\[
  \abs {\Theta _ 0 -  \num {0.590106125}} \leq 10 ^ {-9},\quad
  \abs {\varphi(0) - \num {0.8730}} \leq 10 ^ {-4}.
\]
We use the following bounds that follow from the above estimates (recall
that \( \xi _ 0 = \sqrt {\Theta _ 0}\) and \(C _ 1 = \varphi(0) ^ 2/3\)):
\[
  \frac {\num {5901}}{\num {10000}} < \Theta _ 0 < \frac {5902}{\num {10000}},
  \quad
  \frac {\num {7681}}{\num {10000}} < \xi _ 0 < \frac {\num {7682}}{\num {10000}},
  \quad
  \frac {253}{\num {1000}} < C _ 1 < \frac {255}{\num {1000}}.
\]
For the constants \(A\), \(B\) and \(C\) above we get
\[
    \frac {253}{\num {1000}} < A < \frac {255}{\num {1000}},
    \quad
    B < \frac {\num {795 156 337}}{\num {300 000 000}},
    \quad
    C < \frac {\num {37 211 493 241}}{\num {15 000 000 000}}.
\]
Inserting these bounds, we find that
\[
  B ^ 2 + 4AC
  <
  \frac {\num {860 007 938 906 177 569}}{\num {90 000 000 000 000 000}}
  <
  \frac {\num {864 900 000 000 000 000}}{\num {90 000 000 000 000 000}}
  =
  \Bigl(\frac {31}{10}\Bigr) ^ 2.
\]
Finally, by substituting these bounds in~\eqref{eq:b0}, we find that
\[
  b _ 0 < {\frac {\num {2 976 164 387 091 257 569}}{\num {23 043 240 000 000 000}}
        <  \frac {\num {2 995 621 200 000 000 000}}{\num {23 043 240 000 000 000}}
        = 130.}
\]

It follows that at least for \(b > 130\) we have \( \lambda _ 1(b) < \Theta _ 0 b \).
\end{proof}

\section{Small intensities}

\begin{lemma}
  If \( 0 < b \leq 131 \) then \(\lambda _ 1 (b) < \Theta _ 0 b\).
\end{lemma}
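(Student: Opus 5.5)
The plan is to apply Rayleigh's principle with trial states of fixed angular momentum, $\psi(r,\theta)=f(r)\ee^{\ii m\theta}$, where $f$ is a polynomial in $r^2$ (or in $r$) of low degree. For such $\psi$ the quadratic form separates:
\[
  \Quadraticform[\psi]=2\pi\int_0^1\Bigl[f'(r)^2+\Bigl(\frac{m}{r}-\frac{br}{2}\Bigr)^2 f(r)^2\Bigr]r\dd r .
\]
Since $\abs{\Ab}=br/2$, the product $\frac{m}{r}\cdot\frac{br}{2}$ is a constant, so the integrand is polynomial in $r$ with coefficients polynomial in $b$ whenever $f$ is a polynomial and $f$ vanishes to order at least $\abs{m}$ at $0$. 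A convenient choice is $f(r)=r^{m}p(r^2)$ with $m\ge 0$. For a fixed coefficient vector of $p$, the quantity $\I[\psi]=\Quadraticform[\psi]-\Theta_0 b\norm{\psi}^2$ is then an explicit polynomial in $b$, of degree two, with rational coefficients apart from its dependence on $\Theta_0$. I would replace $\Theta_0$ by the rational lower bound $5901/10000$ from the previous proof: for $b>0$ and $\norm{\psi}>0$ this only increases $\I$, so proving negativity with the lower bound suffices. On an interval $[b_-,b_+]$, a quadratic $q(b)$ with rational coefficients is certified negative by checking that $q(b_\pm)<0$, and additionally that $q$ is convex (then it lies below its chord) or that its vertex value is negative. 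All of these are finitely many exact rational inequalities.

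The key steps, in order, are as follows.

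\textbf{Subdivide.} Cover $(0,131]$ by finitely many overlapping intervals $J_k$, choosing $m$ in each as the integer nearest $b/2-\sqrt{b}\,\xi_0$, following the large-$b$ heuristic $m_{\mathrm{opt}}$.

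\textbf{Choose the trial state.} On each $J_k$, pick $p$ by numerically minimising the Rayleigh quotient at the midpoint over polynomials of degree $N$ in $r^2$. In the basis $r^{m+2j}$ this is a generalized eigenvalue problem $Hc=\mu Gc$ with exactly computable rational Gram matrices. Round the coefficients to rationals.

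\textbf{Certify.} Evaluate $\I$ exactly as a rational quadratic in $b$ and verify negativity on $J_k$ as above.

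\textbf{Treat small $b$ separately.} Near $b=0$ both sides tend to $0$. Taking $m=0$ and $f\equiv 1$ gives $\I=\pi b^2/12-\Theta_0 b\pi$, which is negative for $0<b<12\Theta_0\approx 7.08$. This covers an initial interval analytically and avoids the degeneracy at $0$.

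The main obstacle I expect is near the crossing points where the optimal angular momentum switches from $m$ to $m+1$, and more generally for larger $b$ (roughly $b\in[30,131]$). There the gap $\Theta_0 b-\lambda_1(b)$ is only of order $C_1\sqrt b\cdot$(relative size) compared with $\Theta_0 b$, and the ground state concentrates near the boundary in a layer of width $b^{-1/2}$. A low-degree polynomial in $r^2$ then approximates it poorly. To handle this I would increase the degree $N$ with $b$: $r^{m}$ already gives boundary concentration when $m\sim b/2$, so I expect moderate $N$ (around $5$–$10$) to suffice. I would also shrink the intervals near the switching points until the rigorous quadratic check succeeds, accepting many intervals if needed since each check is mechanical. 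If the margin is too thin with the crude bound on $\Theta_0$, I would use the sharper rational bound $0.590106124\le\Theta_0$.
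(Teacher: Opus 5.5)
Your plan is essentially the paper's proof. Both use fixed angular momentum $m$ and trial functions $r^m p(r^2)$ with $\deg p\le 8$; the paper's basis $r^m(1-r^2)^j$, $j=0,\dots,8$, spans the same space. Both compute Gram matrices with rational entries from beta integrals, take a numerical generalized eigenvector and rationalize it, replace $\Theta_0$ by $\Theta_*=5901/10000$, and check the convex quadratic in $b$ exactly at integer endpoints. The only organizational difference is that the paper works by $m$ rather than by $b$. For each $m=1,\dots,56$ it takes the eigenvector at $b_{\mathrm{ini}}=2m+2.25\sqrt m$, which is essentially your heuristic inverted, and uses the two roots of the resulting quadratic to obtain an interval. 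A single degree $N=8$ with one interval per $m$ turned out to suffice, with no refinement near the switching points. That settles the expectations your plan leaves open.

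One correction: for $f\equiv 1$ you get $\Quadraticform[1]=2\pi\int_0^1 (b^2r^2/4)\,r\,\dd r=\pi b^2/8$, not $\pi b^2/12$. The constant state therefore only covers $b<8\Theta_0\approx 4.72$; the paper uses $b\le 4$, via $\Theta_0>1/2$. Your numerical covering must reach below $4.72$, not merely below $7.08$, or a hole is left. In the paper this is handled by the $m=1$ interval $[3,7]$.
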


\begin{proof}
First, by inserting a constant function as trial state, we find that
\( \lambda _ 1 (b) \leq b ^ 2/8\), and so \(\lambda _ 1(b) < \Theta _ 0 b\)
for \( 0 < b < 8\Theta _ 0 \). Since \(\Theta _ 0 > 1/2\) we only need
to prove the inequality for \(b \geq 4\).

For a fixed angular momentum \(m\) we will use a trial state of the form
\begin{equation}\label{eq:trialstate}
  u(r,\theta) = \sum _ {j = 0} ^ N c _ j \psi _ j(r)\frac {\ee ^ {\ii m\theta}}{\sqrt {2\pi}},
  \quad \text {where} \quad
  \psi _ j (r) = r ^ m (1 - r ^ 2) ^ j
\end{equation}
and where we will take \(N = 8\).

We will show that for \(3 \leq b \leq 131\) it is possible to choose \( m \) and
the coefficients \(c _ j\) so that \(\Quadraticform[u] < \Theta _ * b \norm {u}
^2\), where \(\Theta _ * = 5901/10000 < \Theta _0 \). As in the previous section,
this establishes the desired inequality for the corresponding value of $b$. We
actually proceed in the reverse direction, by considering increasing values of
$m$ and constructing for each an interval of values for $b$ in which the
inequality holds, until we have covered the interval $3\le b\le 131$.

We first carried out the computation numerically, and then verified it in
Mathematica using only integers and rational numbers. In particular, no
rounding errors occur in the verification step.

When calculating the energy of our trial state we will need the beta function,
\[
  B(z,w)
  = \int _ 0 ^ 1 t ^ {z - 1}(1 - t) ^ {w - 1} \dd t
  = \frac {\Gamma(z)\Gamma(w)}{\Gamma(z + w)}.
\]
It is defined for \(z\) and \(w\) with positive real part, and it is rational
when \(z\) and \(w\) are integers.

Let us introduce the matrices \(M\), \(K\) and \(P\) that we need for our
calculations. To calculate the integrals below, we use the change of variable \(t
= r ^ 2\) and then identify the result as beta functions. First we have the mass
matrix
\[
  M _ {j,k} = \innerproduct {\psi _ j(r),\psi _ k(r)} = \frac {1}{2} B(m + 1,j + k + 1).
\]
Then we have the kinetic part, as well as the part of the potential that is
independent of \(b\),
\[
  \begin{aligned}
    K _ {j,k}
    & = \innerproduct {\psi _ j ' (r),\psi _ k '(r)}
      + \innerproduct {(m^2/r^2) \psi _ j(r),\psi _ k(r)}\\
    & = \frac {1}{2}
      \bigl[
        2m^2 B(m,j + k + 1)
        \\
        &\qquad\qquad
        - 2m(j + k)B(m + 1,j + k)
        + 4jk B(m + 2,j + k -1)
      \bigr].
  \end{aligned}
\]
Here we interpret the terms with non-positive second argument in the
function as zero. Finally, we have the potential part that stands in
front of \(b ^ 2\) in the form,
\[
  P _ {j,k}
  =
  \innerproduct {(r^2/4) \psi _ j(r),\psi _ k(r)}
  =
  \frac {1}{8}B(m + 2,j + k + 1).
\]
We stress that all the entries in the matrices are rational numbers.

With these notations and with any \(c = (c _ 0,\ldots,c _ {N})\) constructed from
coefficients \(c _ j\) in~\eqref{eq:trialstate} we get a trial state \(u\), and
\[
  \norm {u} ^ 2 = \innerproduct{c,Mc},\quad
  \Quadraticform[u] = \innerproduct{c,Kc}
  - mb\innerproduct{c,Mc}
  + b ^ 2 \innerproduct{c,Pc}.
\]

To obtain a first approximate \(c\) we proceed as follows. For each \(m\) we set
the initial \(b\) to \(b _ {\mathrm {ini}} \coloneq 2m + 2.25\sqrt{m}\). Then we
calculate, numerically, the smallest eigenvalue of the matrix \(K - mb _ {\mathrm
{ini}} M + b _ {\mathrm {ini}} ^ 2 P\) with respect to the mass matrix \( M\). We
take as \(c\) the corresponding eigenvector, normalized to be \(1\) in the first
entry. Then we build the quadratic polynomial \(p\),
\[
  p(b)
  =
  \innerproduct{c,Kc}
  - (m + \Theta _ *) b\innerproduct{c,Mc}
  + b ^ 2 \innerproduct{c,Pc}.
\]
Assuming that $p$ has two real zeros \(b _ {\mathrm {min}} < b _ {\mathrm
{max}}\), we have
\[
  \Quadraticform[u] - \Theta _ 0 b \norm {u} ^2<p(b)
  =
  \Quadraticform[u] - \Theta _ * b \norm {u} ^2<0
\]
for all \(b\in(b _ {\mathrm {min}},b _ {\mathrm {max}})\). Here we have to choose
\(N\), the dimension of the space of trial functions, large enough so that \( p
\) has two real zeros. It turned out that \(N = 8\) was sufficient for our
needs.

From these numerical calculations we now build rational, in fact integer,
endpoints and rational coefficients, so that the final verification uses only
rational numbers.

We first set \(b _ {\mathrm {left}} = \lceil b _ {\mathrm {min}} \rceil\) and \(b
_ {\mathrm {right}} = \lfloor b _ {\mathrm {max}}\rfloor\). It turned out that we
always got \(b _ {\mathrm {left}} < b _ {\mathrm {right}}\), see
Table~\ref{tab:intervaltable}.

Next, we rationalize the \(c\) vector in Mathematica. The only purpose of
that step is to work with rational values. We use a granularity of \(10 ^
{-2}\). We call the rational vector \(c _ {\mathrm {rat}}\), and build a trial
state \(u _ {\mathrm {rat}}\) from it. Then we insert this trial state with the
integers \(b = b _ {\mathrm {left}}\) and \(b = b _ {\mathrm {right}}\) and
verify that \emph {the rational numbers}
\[
  \Quadraticform[u _ {\mathrm {rat}}]
  - \Theta _ * b \norm {u _ {\mathrm {rat}}} ^2
  =
  \innerproduct{c _ {\mathrm {rat}},Kc _ {\mathrm {rat}}}
  - (m + \Theta _ *) b\innerproduct{c _ {\mathrm {rat}},Mc _ {\mathrm {rat}}}
  + b ^ 2 \innerproduct{c _ {\mathrm {rat}},Pc _ {\mathrm {rat}}}
\]
are negative. Since the parabola is convex and since \(\Theta _ * < \Theta _ 0\)
this proves that \(\lambda _ 1(b) < \Theta _ 0 b\) for \(b \in [b _ {\mathrm
{left}},b _ {\mathrm {right}}]\). In Table~\ref{tab:intervaltable} we present the
overlapping intervals we get for the different \(m\) that we use. The above
expression is indeed negative at both endpoints of each of those intervals. In
Appendix~\ref{sec:details} we give the full Mathematica code and the output we
get from it.

To sum up, the constant trial state gives the claimed inequality for
\(0 < b \leq 4\). For \(3 \leq b \leq 131\), Table~\ref{tab:intervaltable}
provides overlapping intervals on which the inequality is verified by explicit
trial states and exact rational computations. Hence
\(\lambda _ 1(b) < \Theta _ 0 b\) for all \(0 < b \leq 131\).
\end{proof}

\section*{Acknowledgements}

C.~L. acknowledges support from the INdAM GNAMPA Project \emph{Functional and
spectral analysis for differential operators} (CUP E53C25002010001) and would
like to thank the Isaac Newton Institute for Mathematical Sciences, Cambridge,
for support and hospitality during the programme \emph{Geometric spectral theory
and applications} (EPSRC grant EP/Z000580/1), where part of this work was carried
out.

\appendix
\section{The de Gennes model}\label{sec:deGennes}

The parameter-dependent family of operators \(\mathcal G(\xi) = -d ^ 2/d t ^ 2 +
(t - \xi) ^ 2\), acting in \(L ^ 2((0,+\infty))\) with Neumann boundary condition
at zero, has been studied in many places. We refer to \cite[Chapter 3]{MR2662319}
for the results we need (we warn the reader that $\xi$ in that reference must be
replaced with $-\xi$ to match our convention). The smallest eigenvalue $\mu(\xi)$
of this operator is an analytic function of the parameter $\xi$, which has a
unique minimum \(\Theta _ 0 \approx 0.59\) attained only for \(\xi = \xi _ 0 =
\sqrt {\Theta _ 0} \approx 0.768\). The corresponding eigenfunction \(\varphi\),
chosen \(L ^ 2\)-normalized and positive, gives rise to the constant \(C _ 1 =
\varphi(0) ^ 2/3\) that is often used in the literature. In fact, we use more
precise estimates of \(\Theta _ 0\) and \(\varphi (0)\) from~\cite{MR2912745}.

The momentum integrals $T _ k \coloneq \int _ 0 ^ {+\infty} t ^ k \varphi(t) ^ 2
\dd t$ can be deduced from the integrals $M _ k \coloneq \int _ 0 ^ {+\infty} (
t-\xi _ 0 ) ^ k \varphi(t) ^ 2 \dd t$ studied in \cite{MR2662319} by expanding
$(t-\xi_0)^k$. The expressions for $M_0$, $M_1$, $M_3$ and $M_4$ can be obtained
from Lemma 3.2.7 and the recurrence formula (3.54) in \cite{MR2662319} (based on
\cite{MR1608449}).

\newpage

\section{Program and tables}\label{sec:details}

This appendix contains the Mathematica code used for the calculations and tables
with the results from the computations.

\begin{verbatim}
terms = 8;

matM[m_] =
  Table[
    1/2 Beta[m + 1, j + k + 1],
    {j, 0, terms},{k, 0, terms}
  ];

matK[m_] =
  Table[
    1/2(2 m^2 Beta[m, j + k + 1]
    - 2m (j + k) If[j + k > 0, Beta[m + 1, j + k], 0]
    + 4j k If[j + k > 1, Beta[m + 2, j + k - 1], 0]),
    {j, 0, terms}, {k, 0, terms}
  ];

matP[m_] =
  Table[
    1/8 Beta[m + 2, j + k + 1],
    {j, 0, terms}, {k, 0, terms}
  ];

smalleig[b_, m_] :=
  Eigensystem[
    {N[matK[m] - m b matM[m] + b^2 matP[m]], N[matM[m]]}, -1
  ];

findInterval[m_] :=
  Module[
    {eigs, vec, pol, btab, bleft, bright,
     rvec, leftval, rightval},
    eigs = smalleig[2*m + 2.25*Sqrt[m], m];
    vec = eigs[[2,1]]/eigs[[2,1,1]];
    pol = Collect[
            Dot[
             {{vec}}.(matK[m] - (m + 5901/10000) b matM[m]
                                 + b^2 matP[m]),
             vec
             ], b][[1,1]];
    btab     = SolveValues[pol==0,b];
    bleft    = Ceiling[btab[[1]]];
    bright   = Floor[btab[[2]]];
    rvec     = Rationalize[vec, 10^(-2)];
    leftval  = Dot[
                {{rvec}}.(matK[m]
                  - (m + 5901/10000) bleft matM[m]
                  + bleft^2 matP[m] ),
                rvec
                ][[1,1]];
    rightval = Dot[
                {{rvec}}.(matK[m]
                  - (m + 5901/10000) bright matM[m]
                  + bright^2 matP[m] ),
                rvec
                ][[1,1]];
    {m, bleft, bright, leftval, rightval, vec,rvec}
  ];

Table[findInterval[m], {m, 1, 56}]//TableForm
\end{verbatim}

\begin{longtblr}[
  caption={Table with overlapping intervals that work for each angular momentum
  \(m\). These are the \texttt {bleft} and \texttt {bright} in the Mathematica
  output.},
  label=tab:intervaltable]
  {colspec={lp{2cm}lp{2cm}lp{2cm}ll}}
  \toprule
  \( m\) & \([b _ {\mathrm {left}},b _ {\mathrm {right}}]\) &
  \( m\) & \([b _ {\mathrm {left}},b _ {\mathrm {right}}]\) &
  \( m\) & \([b _ {\mathrm {left}},b _ {\mathrm {right}}]\) &
  \( m\) & \([b _ {\mathrm {left}},b _ {\mathrm {right}}]\) \\
  \midrule
  \( 1\) & \([ 3, 7]\) &  \(15\) & \([37,42]\) & \(29\) & \([ 68, 73]\) & \(43\) & \([ 98,104]\) \\
  \( 2\) & \([ 6,10]\) &  \(16\) & \([39,44]\) & \(30\) & \([ 70, 76]\) & \(44\) & \([100,106]\) \\
  \( 3\) & \([ 9,13]\) &  \(17\) & \([41,46]\) & \(31\) & \([ 72, 78]\) & \(45\) & \([102,108]\) \\
  \( 4\) & \([11,15]\) &  \(18\) & \([43,49]\) & \(32\) & \([ 74, 80]\) & \(46\) & \([105,110]\) \\
  \( 5\) & \([14,18]\) &  \(19\) & \([46,51]\) & \(33\) & \([ 76, 82]\) & \(47\) & \([107,112]\) \\
  \( 6\) & \([16,20]\) &  \(20\) & \([48,53]\) & \(34\) & \([ 79, 84]\) & \(48\) & \([109,114]\) \\
  \( 7\) & \([18,23]\) &  \(21\) & \([50,55]\) & \(35\) & \([ 81, 86]\) & \(49\) & \([111,117]\) \\
  \( 8\) & \([21,25]\) &  \(22\) & \([52,58]\) & \(36\) & \([ 83, 89]\) & \(50\) & \([113,119]\) \\
  \( 9\) & \([23,28]\) &  \(23\) & \([54,60]\) & \(37\) & \([ 85, 91]\) & \(51\) & \([115,121]\) \\
  \(10\) & \([25,30]\) &  \(24\) & \([57,62]\) & \(38\) & \([ 87, 93]\) & \(52\) & \([118,123]\) \\
  \(11\) & \([28,32]\) &  \(25\) & \([59,64]\) & \(39\) & \([ 89, 95]\) & \(53\) & \([120,125]\) \\
  \(12\) & \([30,35]\) &  \(26\) & \([61,67]\) & \(40\) & \([ 92, 97]\) & \(54\) & \([122,127]\) \\
  \(13\) & \([32,37]\) &  \(27\) & \([63,69]\) & \(41\) & \([ 94,100]\) & \(55\) & \([124,129]\) \\
  \(14\) & \([34,39]\) &  \(28\) & \([65,71]\) & \(42\) & \([ 96,102]\) & \(56\) & \([126,131]\) \\
  \bottomrule
\end{longtblr}

\newpage

\begin{longtblr}[
  caption={The rationalized coefficient vectors used for each \(m\), called
  \texttt {rvec} in the Mathematica code.},
  label=tab:mandvec]{colspec={Q[l,$]Q[l,$]}}
  \toprule
    m & c _ {\mathrm {rat}} \\
  \midrule
  1  & ( 1, \frac {1}{2}, \frac {3}{10}, \frac {1}{12}, \frac {1}{44}, 0, 0, 0, 0)\\
  2  & ( 1, 1, \frac {8}{9}, \frac {4}{9}, \frac {1}{5}, \frac {1}{15}, \frac {1}{49}, 0, 0)  \\
  3  & ( 1, \frac {3}{2}, \frac {19}{11}, \frac {5}{4}, \frac {8}{11}, \frac {5}{16}, \frac {1}{6}, 0, \frac {1}{27})  \\
  4  & ( 1, 2, \frac {17}{6}, \frac {13}{5}, \frac {37}{19}, 1, \frac {6}{7}, -\frac{1}{8}, \frac {3}{10})  \\
  5  & ( 1, \frac {5}{2}, \frac {21}{5}, \frac {14}{3}, \frac {30}{7}, \frac {9}{4}, \frac {10}{3}, -\frac {8}{7}, \frac {23}{14})  \\
  6  & ( 1, 3, \frac {64}{11}, \frac {83}{11}, \frac {25}{3}, \frac {23}{6}, \frac {109}{10}, -\frac {77}{13}, \frac {61}{9})  \\
  7  & ( 1, \frac {7}{2}, \frac {123}{16}, \frac {113}{10}, \frac {286}{19}, \frac {49}{11}, \frac {249}{8}, -\frac {181}{8}, \frac {298}{13})  \\
  8  & ( 1, 4, \frac {49}{5}, 16, \frac {129}{5}, \frac {3}{4}, \frac {637}{8}, -\frac {494}{7}, \frac {535}{8})  \\
  9  & ( 1, \frac {9}{2}, \frac {61}{5}, \frac {65}{3}, \frac {341}{8}, -\frac {185}{13}, \frac {3161}{17}, -\frac {2097}{11}, \frac {1912}{11})  \\
  10 & ( 1, 5, \frac {104}{7}, \frac {254}{9}, \frac {137}{2}, -\frac {1008}{19}, \frac {6433}{16}, -\frac {3228}{7}, \frac {3707}{9})  \\
  11 & ( 1, \frac {11}{2}, \frac {160}{9}, \frac {320}{9}, \frac {1291}{12}, -\frac {1228}{9}, \frac {10583}{13}, -\frac {7153}{7}, \frac {13567}{15})  \\
  12 & ( 1, 6, 21, \frac {478}{11}, \frac {1987}{12}, -\frac {2669}{9}, \frac {24923}{16}, -\frac {18970}{9}, \frac {14907}{8})  \\
  13 & ( 1, \frac {13}{2}, \frac {49}{2}, \frac {413}{8}, \frac {3751}{15}, -\frac {1742}{3}, \frac {19865}{7}, -\frac {12286}{3}, \frac {54539}{15})  \\
  14 & ( 1, 7, \frac {85}{3}, \frac {1014}{17}, \frac {4079}{11}, -\frac {13723}{13}, \frac {54485}{11}, -\frac {83196}{11}, \frac {74471}{11})  \\
  15 & ( 1, \frac {15}{2}, \frac {487}{15}, 67, \frac {7023}{13}, -\frac {23564}{13}, \frac {133205}{16}, -\frac {133709}{10}, \frac {169423}{14})  \\
  16 & ( 1, \frac {367}{46}, \frac {776}{21}, 73, \frac {13152}{17}, -\frac {38645}{13}, \frac {94736}{7}, -\frac {68269}{3}, \frac {146082}{7})  \\
  17 & ( 1, \frac {178}{21}, \frac {460}{11}, \frac {846}{11}, \frac {11987}{11}, -\frac {61003}{13}, \frac {448475}{21}, -\frac {149829}{4}, \frac {383454}{11})  \\
  18 & ( 1, \frac {296}{33}, \frac {1176}{25}, \frac {856}{11}, \frac {19640}{13}, -\frac {50195}{7}, 32814, -\frac {658426}{11}, \frac {792349}{14})  \\
  19 & ( 1, \frac {142}{15}, \frac {158}{3}, \frac {224}{3}, \frac {90773}{44}, -\frac {138506}{13}, 49226, -\frac {2328949}{25}, \frac {1253951}{14})  \\
  20 & ( 1, \frac {229}{23}, \frac {411}{7}, \frac {862}{13}, \frac {69426}{25}, -\frac {46337}{3}, \frac {289057}{4}, -\frac {424759}{3}, \frac {1523611}{11})  \\
  21 & ( 1, \frac {115}{11}, \frac {587}{9}, \frac {669}{13}, \frac {29503}{8}, -\frac {241005}{11}, \frac {520094}{5}, -\frac {1053139}{5}, \frac {2307215}{11})  \\
  22 & ( 1, \frac {186}{17}, \frac {794}{11}, \frac {201}{7}, \frac {38683}{8}, -\frac {152401}{5}, \frac {6323882}{43}, -\frac {3380292}{11}, \frac {7478327}{24})  \\
  23 & ( 1, \frac {80}{7}, \frac {1115}{14}, -\frac {31}{9}, \frac {18793}{3}, -\frac {458357}{11}, \frac {5113712}{25}, -\frac {7047311}{16}, \frac {2274274}{5})  \\
  24 & ( 1, \frac {143}{12}, \frac {701}{8}, -\frac {607}{13}, \frac {88272}{11}, -\frac {448565}{8}, \frac {1961719}{7}, -\frac {8074862}{13}, \frac {5226819}{8})  \\
  25 & ( 1, \frac {149}{12}, \frac {673}{7}, -\frac {1130}{11}, \frac {61031}{6}, -\frac {1264330}{17}, \frac {3029353}{8}, -\frac {7766767}{9}, \frac {19416853}{21})  \\
  26 & ( 1, \frac {116}{9}, \frac {1473}{14}, -\frac {867}{5}, \frac {217028}{17}, -\frac {1265670}{13}, \frac {23742004}{47}, -\frac {8277980}{7}, \frac {11615375}{9})  \\
  27 & ( 1, \frac {107}{8}, \frac {919}{8}, -\frac {2085}{8}, \frac {111121}{7}, -\frac {881456}{7}, \frac {7325226}{11}, -\frac {19200259}{12}, \frac {14228561}{8})  \\
  28 & ( 1, \frac {97}{7}, \frac {876}{7}, -\frac {2565}{7}, \frac {215246}{11}, -\frac {1449586}{9}, \frac {1736539}{2}, -\frac {23533639}{11}, \frac {24220651}{10})  \\
  29 & ( 1, \frac {244}{17}, \frac {4761}{35}, -\frac {3450}{7}, \frac {311010}{13}, -\frac {2243017}{11}, \frac {8964361}{8}, -\frac {19805902}{7}, \frac {32619861}{10})  \\
  30 & ( 1, \frac {163}{11}, \frac {1033}{7}, -\frac {14125}{22}, \frac {145127}{5}, -\frac {2557077}{10}, \frac {12891229}{9}, -\frac {37038129}{10}, \frac {13043512}{3})  \\
  31 & ( 1, \frac {153}{10}, \frac {1438}{9}, -\frac {4081}{5}, \frac {454492}{13}, -\frac {2860535}{9}, \frac {12702547}{7}, -\frac {33615781}{7}, \frac {63130052}{11})  \\
  32 & ( 1, \frac {142}{9}, \frac {518}{3}, -\frac {11193}{11}, \frac {501893}{12}, -\frac {4701821}{12}, \frac {25077887}{11}, -\frac {98734523}{16}, \frac {120107739}{16})  \\
  33 & ( 1, \frac {65}{4}, \frac {2049}{11}, -\frac {13732}{11}, \frac {546868}{11}, -\frac {7669029}{16}, \frac {22734343}{8}, -\frac {62905815}{8}, \frac {136288637}{14})  \\
  34 & ( 1, \frac {117}{7}, \frac {1003}{5}, -\frac {22664}{15}, \frac {646121}{11}, -\frac {11060612}{19}, \frac {35162523}{10}, -\frac {89466803}{9}, \frac {100182269}{8})  \\
  35 & ( 1, \frac {86}{5}, \frac {3666}{17}, -\frac {14461}{8}, \frac {138007}{2}, -\frac {9129316}{13}, \frac {47527098}{11}, -\frac {62369911}{5}, \frac {175852778}{11})  \\
  36 & ( 1, \frac {53}{3}, \frac {3009}{13}, -\frac {23548}{11}, \frac {403131}{5}, -\frac {5892557}{7}, \frac {26371834}{5}, -\frac {202062129}{13}, \frac {182357222}{9})  \\
  37 & ( 1, \frac {127}{7}, \frac {5705}{23}, -\frac {20101}{8}, \frac {1312173}{14}, -\frac {7021314}{7}, \frac {63988747}{10}, -\frac {384793359}{20}, \frac {1198776030}{47})  \\
  38 & ( 1, \frac {93}{5}, \frac {1327}{5}, -\frac {32182}{11}, \frac {2168601}{20}, -\frac {10696210}{9}, 7717818, -\frac {473324147}{20}, \frac {350899896}{11})  \\
  39 & ( 1, \frac {286}{15}, \frac {3119}{11}, -\frac {23675}{7}, \frac {1498397}{12}, -\frac {18209036}{13}, \frac {101828840}{11}, -\frac {405141307}{14}, \frac {198264446}{5})  \\
  40 & ( 1, \frac {254}{13}, \frac {5747}{19}, -\frac {73805}{19}, \frac {715852}{5}, -1642536, \frac {143589502}{13}, -35186816, \frac {539037065}{11})  \\
  41 & ( 1, 20, \frac {2900}{9}, -\frac {84264}{19}, \frac {980887}{6}, -\frac {11501881}{6}, \frac {39340096}{3}, -\frac {297888954}{7}, \frac {542013226}{9})  \\
  42 & ( 1, \frac {184}{9}, \frac {1714}{5}, -\frac {40287}{8}, \frac {1301596}{7}, -\frac {55680026}{25}, \frac {123959643}{8}, -\frac {204824615}{4}, \frac {1766954375}{24})  \\
  43 & ( 1, \frac {230}{11}, \frac {1821}{5}, -\frac {51206}{9}, \frac {421403}{2}, -\frac {28342151}{11}, \frac {164040335}{9}, -\frac {429221979}{7}, \frac {985067489}{11})  \\
  44 & ( 1, \frac {171}{8}, \frac {4637}{12}, -\frac {83177}{13}, \frac {1665375}{7}, -\frac {41560475}{14}, \frac {277526491}{13}, -\frac {1169394213}{16}, 108403952)  \\
  45 & ( 1, \frac {131}{6}, \frac {6142}{15}, -\frac {42985}{6}, \frac {1070903}{4}, -\frac {27256667}{8}, \frac {124510652}{5}, -\frac {954066651}{11}, \frac {3265601563}{25})  \\
  46 & ( 1, \frac {156}{7}, \frac {3467}{8}, -\frac {103864}{13}, \frac {1501531}{5}, -\frac {35063396}{9}, \frac {173607013}{6}, -\frac {717470237}{7}, 156709469)  \\
  47 & ( 1, \frac {91}{4}, \frac {3665}{8}, -\frac {168654}{19}, \frac {2350708}{7}, -\frac {310750439}{70}, \frac {535915183}{16}, -\frac {2050834167}{17}, \frac {561648773}{3})  \\
  48 & ( 1, \frac {116}{5}, \frac {3386}{7}, -\frac {108100}{11}, \frac {4118623}{11}, -\frac {40331997}{8}, \frac {424992302}{11}, -\frac {1131564945}{8}, \frac {1336585319}{6})  \\
  49 & ( 1, \frac {331}{14}, \frac {6632}{13}, -\frac {86751}{8}, \frac {5828075}{14}, -\frac {256819411}{45}, \frac {888280661}{20}, -\frac {1817580518}{11}, \frac {2112338615}{8})  \\
  50 & ( 1, \frac {217}{9}, \frac {4837}{9}, -\frac {83499}{7}, \frac {3231210}{7}, -\frac {186784214}{29}, \frac {661573630}{13}, -\frac {7886187967}{41}, \frac {4677217849}{15})  \\
  51 & ( 1, \frac {221}{9}, \frac {2828}{5}, -\frac {156995}{12}, \frac {2552641}{5}, -\frac {115962655}{16}, \frac {930061685}{16}, -\frac {2677816973}{12}, \frac {3302332628}{9})  \\
  52 & ( 1, 25, \frac {2973}{5}, -\frac {114475}{8}, \frac {6195764}{11}, -\frac {81327563}{10}, \frac {728181742}{11}, -\frac {4128855659}{16}, \frac {1721235577}{4})  \\
  53 & ( 1, \frac {229}{9}, \frac {5620}{9}, -\frac {62439}{4}, \frac {7439443}{12}, -\frac {136522249}{15}, \frac {751715847}{10}, -\frac {1784929117}{6}, \frac {1508971868}{3})  \\
  54 & ( 1, \frac {259}{10}, \frac {4586}{7}, -\frac {271775}{16}, \frac {20424629}{30}, -\frac {274304825}{27}, \frac {681006877}{8}, -\frac {4445044522}{13}, \frac {5274895759}{9})  \\
  55 & ( 1, \frac {369}{14}, \frac {6867}{10}, -\frac {239718}{13}, \frac {10444589}{14}, -\frac {79186992}{7}, \frac {1057573046}{11}, -\frac {2351257957}{6}, \frac {4766112306}{7})  \\
  56 & ( 1, \frac {134}{5}, \frac {6472}{9}, -\frac {139813}{7}, \frac {10605507}{13}, -\frac {25132953}{2}, \frac {324928729}{3}, -\frac {5822498153}{13}, \frac {21293986697}{27})  \\
  \bottomrule
\end{longtblr}

\newpage

\begin{longtblr}[
  caption={The exact values of \(\Quadraticform[u _ {\mathrm {rat}}] - \Theta _ *
  b \norm {u _ {\mathrm {rat}}} ^2\) at \(b _ {\mathrm {left}}\) and \(b _
  {\mathrm {right}}\), for each \(m\). Note that each entry in the last two
  columns is negative. These are the \texttt {leftval} and \texttt {rightval}
  in the Mathematica code.},
  label=tab:results]{colspec={Q[l,$]Q[l,$]Q[l,$]}}
  \toprule
  m & \text {value at }b _ {\mathrm {left}} & \text {value at }b _ {\mathrm {right}} \\
  \midrule
  1 & -\frac{6193251389}{8944320000000} & -\frac{5982585742501}{34499520000000} \\
  2 & -\frac{33022175272883}{417161745000000} & -\frac{76709166158227}{584026443000000} \\
  3 & -\frac{336498870762680359}{2021848509542400000} & -\frac{1503630262025024519}{26284030624051200000} \\
  4 & -\frac{1601408321447213}{22813670880000000} & -\frac{22400280127327309}{130037924016000000} \\
  5 & -\frac{10368109835804321}{58663725120000000} & -\frac{345872985922211}{4147940160000000} \\
  6 & -\frac{964029034387889419}{9238102849725750000} & -\frac{33158383315855019}{191960578695600000} \\
  7 & -\frac{27310328006094742153}{1024139766633984000000} & -\frac{47742143270790384797}{682759844422656000000} \\
  8 & -\frac{3317377294217839}{21416915520000000} & -\frac{99142875671057}{666304038400000} \\
  9 & -\frac{72548871846993693166187}{759413112482344320000000} & -\frac{6075756196320077791883}{189853278120586080000000} \\
  10 & -\frac{191105242656754164667}{5891779067384770560000} & -\frac{18940072667147079689}{175350567481689600000} \\
  11 & -\frac{2915378356030456753}{17861572563120000000} & -\frac{104391638477595003869}{619945414378290000000} \\
  12 & -\frac{10175680571251334171}{88542668819404800000} & -\frac{26782574710875620447}{531256012916428800000} \\
  13 & -\frac{155548020980941982479}{2449306416387600000000} & -\frac{115550198332966313522399}{1018911469217241600000000} \\
  14 & -\frac{350031030295502019181}{36784072635863553000000} & -\frac{162991510786164680945771}{984681329021578188000000} \\
  15 & -\frac{4870353240661634747929603}{32799245390040729600000000} & -\frac{37060448996288648432321}{863138036580019200000000} \\
  16 & -\frac{1901762373426052714199969}{17884172915076622354200000} & -\frac{4738345936456410148037}{47438124443174064600000} \\
  17 & -\frac{25940704490081011102996981}{416714412680467469568000000} & -\frac{2803659139220165189034713}{18941564212748521344000000} \\
  18 & -\frac{1945706277421672804879081}{120577087002450078000000000} & -\frac{6569662788649157360439649}{361731261007350234000000000} \\
  19 & -\frac{750146398117776009424213901}{4876781777953345260000000000} & -\frac{336227611229919228725799503}{4696160230621739880000000000} \\
  20 & -\frac{81761271972425561956184760643}{692624932013823860551500000000} & -\frac{10041621097209236972738666618063}{84961991660362393560984000000000} \\
  21 & -\frac{11037178589519902746911}{136994830257032748000000} & -\frac{3186385007735868534392539}{20113331896827989820000000} \\
  22 & -\frac{117449737156898311081239013}{2849500551206303074080000000} & -\frac{7792747478669148030470589419}{256047978101252090513760000000} \\
  23 & -\frac{376926945403786681666742159}{813166663283083549440000000000} & -\frac{18728821133255831423620010053}{243949998984925064832000000000} \\
  24 & -\frac{78678150415237054257689489}{555253196342792403456000000} & -\frac{78872622544826153191086293}{670366663877273755392000000} \\
  25 & -\frac{222654373707321552622481}{2022770522436254791200000} & -\frac{4360344621994138090484719813}{28420937225490597943755600000} \\
  26 & -\frac{119934820763695455926842865059}{1568641272927952145206284000000} & -\frac{136487991922294627118650789}{5820561309565685139912000000} \\
  27 & -\frac{6500834855915161129978147}{155405184538544856115200000} & -\frac{1449611123956502136782023}{22200740648363550873600000} \\
  28 & -\frac{6940534355834380582054729}{1159935331711975428456000000} & -\frac{7747635234229781958774775259}{75395796561278402849640000000} \\
  29 & -\frac{436309597754176623899142269429}{3015016878365553227886720000000} & -\frac{12562272842071337120716526784241}{92460517603210298988526080000000} \\
  30 & -\frac{2972617077052848688595617}{25576805343871848528000000} & -\frac{942784271167061333486647}{2143740132111364146360000000} \\
  31 & -\frac{57835527175019744931493327669}{666149957826734542641480000000} & -\frac{5512642530059209694870030521}{140242096384575693187680000000} \\
  32 & -\frac{1977431414199117461755939}{35159554914880325529600000} & -\frac{2682119272992269173967027}{36038543787752333667840000} \\
  33 & -\frac{1217238906598191846894881}{49547941795731322944000000} & -\frac{10506581398837470506966059}{99095883591462645888000000} \\
  34 & -\frac{13828154197780707571362405397}{88934961085595846854132500000} & -\frac{27309114248048830551653819881}{203279911052790507095160000000} \\
  35 & -\frac{962055064061900682722617866497}{7363866108450429542764800000000} & -\frac{37751057336381914233284945849}{237211373241400391433600000000} \\
  36 & -\frac{5529588727463486404395627144247}{52958921647225396139997660000000} & -\frac{1689598609863850108058206572301}{52958921647225396139997660000000} \\
  37 & -\frac{507042742155628158535116557081}{6556327310920551853328640000000} & -\frac{6727867072466313762472618063}{108909091543530761683200000000} \\
  38 & -\frac{1485994428194626897507591}{30323530861849625400000000} & -\frac{147437546144919132785681}{1658318094007401389062500} \\
  39 & -\frac{13030862939788118739250693649}{652454567148384028828800000000} & -\frac{5428424962791714486079094287}{48075599684617770545280000000} \\
  40 & -\frac{292624718857192541745243599}{1998465750070075088302500000} & -\frac{229641318843426236808298697}{1712970642917207218545000000} \\
  41 & -\frac{3816483376933398943225366051}{31079435007889757790100800000} & -\frac{867846409313144725209533}{621588700157795155802016000} \\
  42 & -\frac{2781535999263050210170574029}{28398606549606869325750000000} & -\frac{49598367656105787491422242097}{1817510819174839636848000000000} \\
  43 & -\frac{10385014473906096147931466089}{143255133135934293113640000000} & -\frac{5455034794715697883834249}{109045882117392350130000000} \\
  44 & -\frac{66561697062280505897479982221}{1440839169193367010987279360000} & -\frac{5051359871350693964197267256663}{72041958459668350549363968000000} \\
  45 & -\frac{25217985292106568834428336177}{1329116087456479786464000000000} & -\frac{1496087272489576888907870111}{17039949839185638288000000000} \\
  46 & -\frac{2041211787766542997265951623561}{15101963341700663546838720000000} & -\frac{13086528371405089789892936033}{126907254972274483586880000000} \\
  47 & -\frac{29220932036087572035969234282283}{259839876885004951277783040000000} & -\frac{1738894916893820193935712101}{14995376089854856375680000000} \\
  48 & -\frac{634450535525681137718102366399}{7157308573822072278412800000000} & -\frac{914399806218828806257975037}{7215028804256121248400000000} \\
  49 & -\frac{3456391813829523739132767444931}{54128374384600068250553856000000} & -\frac{1024414205156603366779476782599}{378898620692200477753876992000000} \\
  50 & -\frac{564571920337878645123969801154549}{14632237132419098328526243854000000} & -\frac{33665602814369844722560596939101}{2034428692208002976158729092000000} \\
  51 & -\frac{270601306148913019907435999}{22184328239801252388864000000} & -\frac{72675389850412492184509721879}{2551197747577144024719360000000} \\
  52 & -\frac{162288699932006763456678153799}{1429143182670529754588160000000} & -\frac{27325543940257282510795731257}{714571591335264877294080000000} \\
  53 & -\frac{145474238907266196249629}{1604587151452736162250000} & -\frac{262561231656431052423787}{5721499328608613515680000} \\
  54 & -\frac{426640334586937465827392799744199}{6367836510363945593537713152000000} & -\frac{11095151346146113583315694748927}{215858864758099850628397056000000} \\
  55 & -\frac{19704901773149803184653426212313}{463680983367995636044161360000000} & -\frac{305781044177254996599356745902101}{5564171800415947632529936320000000} \\
  56 & -\frac{326354424328040632707646485015239}{19195900016175242241233664312000000} & -\frac{448703701992931931259989204401289}{7904194124307452687566802952000000} \\
  \bottomrule
\end{longtblr}

\newpage

\bibliographystyle{plain}
\bibliography{LS-diskeig}{}

\end{document}